
\documentclass[12pt]{amsart}
\usepackage{amsfonts, amssymb, amsmath, amsthm, euscript}
\usepackage[all]{xy}
\textwidth=36pc
\oddsidemargin=18pt
\evensidemargin=18pt

\swapnumbers
\theoremstyle{plain}
\newtheorem{thm}{Theorem}[section]
\newtheorem{lem}[thm]{Lemma}

\theoremstyle{definition}

\newtheorem*{Ack}{Acknowledgement}

\theoremstyle{remark}

\def\Hom{\operatorname{Hom}}
\def\End{\operatorname{End}}
\def\lann{\operatorname{lann}}
\def\rann{\operatorname{rann}}

\def\dim{\operatorname{dim}}

\def\spec{\operatorname{Spec}}

\def\f{\frac}
\def\ds{\displaystyle}

\begin{document}

\title{Nonsplit module extensions over the one-sided inverse of $k[x]$}


\author{Zheping Lu}
\address{
  Tandon School of Engineering, New York University \\
  Brooklyn, NY 11201}
\email{zl2965@pitt.edu}

\author{Linhong Wang}
\address{Department of Mathematics\\
  University of Pittsburgh\\
  Pittsburgh, PA 15260}
\email{lhwang@pitt.edu}

\author{Xingting Wang}
\address{Department of Mathematics\\
  Howard University\\
  Washington, D.C. 20059}
\email{xingting.wang@howard.edu}

\keywords{simple modules, representations, module extensions}

\subjclass[2010]{\emph{Primary:} 16D60. \emph{Secondary:} 16G99}

\begin{abstract}
Let $R$ be the associative $k$-algebra generated by two elements $x$ and $y$ with defining relation $yx=1$. A complete description of simple modules over $R$ is obtained by using the results of Irving and Gerritzen. We examine the short exact sequence $0\rightarrow U\rightarrow E \rightarrow V\rightarrow 0$, where $U$ and $V$ are simple $R$-modules. It shows that nonsplit extension only occurs when both $U$ and $V$ are one-dimensional, or, under certain condition, $U$ is infinite-dimensional and $V$ is one-dimensional.
\end{abstract}
\maketitle

\section{Introduction}
In this short note, we study nonsplit extensions of simple modules over the associative algebra $R=k\{x,y\}/\langle yx-1\rangle$ over a base field $k$ of characteristic 0. The algebra $R$ is also known as the one-sided inverse of the polynomial algebra $k[x]$ and appeared in the work of \cite{Bav}, \cite{Ger}, \cite{Jac}, and \cite{Irv}. Note that
\[y(1-xy)=(1-xy)x=0.\]
The algebra $R$ is not a domain, and $Z(R)=k$. As a $k$-vector space $R$ has basis
\[\big\{x^iy^j\, \mid\, i,j=0, 1, 2, \ldots \big\}.\]
Moreover, $R$ admits the involution $\eta: x\mapsto y$ and $y\mapsto x$. Hence, the left and right algebraic properties of $R$ are the same.

Jacobson \cite{Jac} gave a faithful irreducible representation of $R$ as follows. Let $S$ be the infinite-dimensional $k$-vector space with the basis $\{e_{1}, e_{2},\ldots\}$ and let $R$ act on $S$ by assigning the following:
\[
\begin{cases}
x\, e_{n}=e_{n+1}, & n>0,\\
y\,e_{n}=e_{n-1}, &  n>1,\\
y\, e_{1}=0.
\end{cases}
\]
It is proved by Bavula \cite{Bav} and Gerriten \cite{Ger} that there is only one isomorphic class of infinite-dimensional simple $R$-modules.
Note that there is an algebra monomorphism $R\to \End_k(k[x])$ such that $x\mapsto x$ and $y\mapsto H^{-1}\frac{d}{dx}$ where $H\in \End_k(k[x])$ is given by $H(f)=\frac{d}{dx}(xf)$ for any $f\in k[x]$. In particular,
\[\bigoplus_{i\ge 0} k\,x^i(1-xy)\cong k[x]\]
is a simple and faithful left $R$-module where the left $R$-module structure on $k[x]$ is via the algebra map $R\to \End_k(k[x])$ discussed above. Follows Bavula \cite{Bav}, $R$ contains a subring which is canonically isomorphic to the ring (without identity) of infinite dimensional matrices. Let
\[F=\bigoplus_{i,j\ge 0} k\,M_{ij} \cong M_{\infty}(k),\]
where $M_{ij}=x^i(1-xy)y^j$ can be identical to the matrix units of $M_{\infty}(k)$. In particular, we have
\[x \sim \left(%
\begin{array}{cccc}
 0 & \ &\ \ \ \ &\ \ \ \\
 1 & 0 & \ \ \ \ &\ \ \ \\
 \ & 1 & 0        &\ \ \ \\
 \ & \ & \ddots    &\ddots \\
\end{array}%
\right),\quad   \quad
y \sim \left(%
\begin{array}{cccc}
 0 & 1 &\  &\ \ \ \  \\
 \ & 0 & 1 &\ \ \ \  \\
 \ & \ & 0 & \ddots  \\
 \ & \ & \ &  \ddots  \\
\end{array}%
\right).\]
As a left $R$-module,
\[F=\bigoplus_{i,j\ge 0} k\,x^i(1-xy)y^j\cong \bigoplus_{i\ge 0} \left(\bigoplus_{t\ge 0}k\,x^t\,x^i(1-xy)y^i\right)\cong \bigoplus_{i\ge 0} k[x]\]
is a direct sum of infinitely many simple $R$-modules. Hence $R$ is neither left nor right noetherian. Similarly, we see that there is an ascending chain of left annihilators in $R$ which is not stable. Then $R$ is neither left nor right Goldie. Moreover, $F$ is equal to the ideal of $R$ generated by $\langle 1-xy \rangle$. Since $F^2=F$, $\lann(F)$ and $\rann(F)$ are both zero, we have $F$ is an essential left and right ideal of $R$, which equals the socle of left and right $R$-module $R$. Hence $F$ is contained in any nonzero ideal of $R$ and it follows that the set of proper (two-sided) ideals of $R$ is
\[\big\{0,\, \langle 1-xy \rangle,\, \langle 1-xy, f(x) \rangle \big\},\]
where $f(x)$ is a monic polynomial in $k[x]$ which is not a monomial. In particular, the ideals of $R$ satisfy the ascending chain condition.

It follows from the work of Bavula \cite{Bav},  Gerritzen\cite{Ger}, and Irving \cite{Irv} that the prime ideals are given by
\[\spec(R)=\left\{ 0,\, \langle 1-xy \rangle,\, \langle 1-xy, f(x) \rangle\right\},\]
where $f(x)$ is a monic irreducible polynomial in $k[x]$ which is not a monomial. In particular, $\langle 1-xy, f(x) \rangle$ are the maximal ideals of $R$. Therefore simple $R$-modules are isomorphic to $k[x]$ or $k[x^{\pm 1}]/ \langle f(x) \rangle$. When $k$ is algebraically closed, the simple $R$-modules are either 1-dimensional or infinite dimensional.
 
A discussion of how Jategaonkar's Main Lemma and a theorem of Stafford's apply to this non-noetherian $R$ is given in the close discussion section.  

\section{nonsplit extensions of simple $R$-modules}
Throughout $k$ is algebraically closed field with $\text{char}(k)=0$. All modules are left modules. Then simple $R$-modules are isomorphic to $k[x]$ or $k[x^{\pm 1}]/ \langle x-\lambda \rangle$ for $\lambda \in k^{\times}$. When a simple module is 1-dimensional, i.e., isomorphic to $k$ as a vector space, the $x$-action is the multiplication by a scalar $\lambda$, and the $y$-action is the multiplication by its inverse $\lambda^{-1}$. We denote such simple $R$-module by $k_{\lambda}$. It is clear that $k_{\lambda_1}\cong k_{\lambda_2}$ as simple $R$-modules for any $\lambda_1,\lambda_2\in k^\times$ if and only if $\lambda_1=\lambda_2$.

We consider the $R$-module extension $E$ with the short exact sequence (\emph{s.e.s.})
\begin{equation}
0\rightarrow U\rightarrow E \rightarrow V\rightarrow 0\tag{1}
\end{equation}
of $R$-modules $U$ and $V$. It is clear that $E$ is isomorphic to $U\oplus V$, as $k$-vector spaces. The $R$-action on $E$ is then given by the ring homomorphism
\[\rho_{\delta}: r\mapsto \left(%
\begin{array}{cc}
 \alpha (r) & \delta (r)\\
 0 & \beta(r)
\end{array}%
\right),\]
where
\[\alpha : R\longrightarrow \End_{k}(U)\quad \text{and}\quad  \beta : R\longrightarrow \End_{k}(V)\]
are ring homomorphisms, and $\delta(r)$ is a $k$-linear map in $\Hom_k(V, U)$ such that  \[\delta(r_{1}r_{2})=\alpha(r_1)\delta(r_2)+\delta(r_1)\beta(r_2)\] for any $r_{1},r_{2}\in R$. In particular, 
\begin{equation*}\label{com}
\alpha(y)\delta(x)+\delta(y)\beta(x)= \delta(yx)=\delta(1).
\end{equation*}
Since $\rho_{\delta}(1)$ must be the identity matrix, we have $\delta(1)=0$.
Therefore, \[\alpha(y)\delta(x)+\delta(y)\beta(x)=0.\] 
That is,  given $\alpha$ and $\beta$, the map $\delta$ is uniquely determined by the pair of $k$-linear maps $\delta(x),\delta(y)\in \Hom_k(V,U)$ satisfying the above compatible condition. 
If $\delta$ is the zero mapping, then $E\cong U\oplus V$. Let $E_{\delta}$ and $E_{\delta'}$ be two module extensions of $U$ by $V$, equipped with ring homomorphisms $\rho_{\delta}$ and $\rho_{\delta'}$. Then $E_{\delta} \cong E_{\delta'}$ if and only if there is a $k$-vector space isomorphism $f: E_{\delta} \longrightarrow E_{\delta'}$ such that $f\circ \rho_{\delta}(r)=\rho_{\delta'}(r)\circ f$. Note that $R$ has the $k$-basis $\{x^iy^j\, \mid\, i,j= 0, 1, 2, \ldots\}$. Therefore, it is sufficient to verify $\rho_{\delta}(x)=f^{-1}\circ \rho_{\delta'}(x)\circ f$ and $\rho_{\delta}(y)=f^{-1}\circ \rho_{\delta'}(y)\circ f$.

Now consider another $R$-module extension $E'$ with the short exact sequence (\emph{s.e.s.})
\begin{equation}
0\rightarrow U'\rightarrow E' \rightarrow V'\rightarrow 0 \tag{2}
\end{equation}
of $R$-modules $U'$ and $V'$. We say that the two \emph{s.e.s} (1) and (2) are \emph{equivalent} if there is a $R$-module isomorphism $f: E\to E'$ such that the restriction of $f$ on $U$ yields an isomorphism from $U$ to $U'$.

In this note, we focus on the $R$-module extension $E$ of a simple $R$-module $U$ by another simple $R$-module $V$. We start with the case when $V$ is infinite-dimensional. It is shown in the following lemma that the s.e.s in this case is always split. This result can be directly derived from Bavula's proof that the infinite-dimensional simple $R$-module $k[x]$ is projective. We include an alternative proof without using projectivity.

\begin{lem}
Suppose $0\rightarrow U \rightarrow E_{\delta} \rightarrow V \rightarrow 0$ is a s.e.s. , where $U$ and $V$ are simple $R$-modules and $\dim_k(V)=\infty$. Then the s.e.s. is always split. 
\end{lem}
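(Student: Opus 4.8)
The plan is to show the sequence splits by producing an explicit $R$-module isomorphism $E_\delta \to U\oplus V$ onto the trivial extension (with block-diagonal homomorphism $\rho_0(r)=\operatorname{diag}(\alpha(r),\beta(r))$), obtained by correcting the tautological $k$-linear splitting. Since $V$ is an infinite-dimensional simple $R$-module it is isomorphic to $k[x]$, so I may fix a $k$-basis $\{v_n\mid n\ge 0\}$ of $V$ with $\beta(x)v_n=v_{n+1}$ for $n\ge 0$, $\beta(y)v_n=v_{n-1}$ for $n\ge 1$, and $\beta(y)v_0=0$. For any $h\in\Hom_k(V,U)$ the map $f\colon E_\delta\to U\oplus V$, $f(u,v)=(u+h(v),v)$, is a $k$-linear isomorphism fixing $U$ and inducing the identity on $V$; since $R$ is generated by $x$ and $y$, it is an $R$-module map (hence an equivalence of extensions) if and only if $\delta(x)=\alpha(x)h-h\beta(x)$ and $\delta(y)=\alpha(y)h-h\beta(y)$. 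So everything reduces to constructing such an $h$.

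Next I would observe that the $x$-equation carries no obstruction: evaluating $\delta(x)=\alpha(x)h-h\beta(x)$ on $v_n$ rearranges to the recursion $h(v_{n+1})=\alpha(x)h(v_n)-\delta(x)v_n$, so choosing $h(v_0)\in U$ and defining $h(v_n)$ for $n\ge 1$ by this recursion produces an $h$ automatically satisfying the $x$-equation. The $y$-equation evaluated on $v_0$ reads $\alpha(y)h(v_0)=\delta(y)v_0$, and this is the only place a hypothesis on $U$ is used: $\alpha(y)\colon U\to U$ is surjective for every simple $U$ (it is multiplication by $\lambda^{-1}\ne0$ when $U\cong k_\lambda$, and it is the surjective backward shift when $U$ is infinite-dimensional, i.e.\ $U\cong k[x]$). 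So I choose $h(v_0)$ with $\alpha(y)h(v_0)=\delta(y)v_0$ and extend $h$ by the recursion, then extend linearly to all of $V=\bigoplus_n k v_n$.

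It then remains only to check the $y$-equation on $v_n$ for $n\ge 1$, namely $\delta(y)v_n=\alpha(y)h(v_n)-h(v_{n-1})$. Substituting $h(v_n)=\alpha(x)h(v_{n-1})-\delta(x)v_{n-1}$ and using $\alpha(y)\alpha(x)=\alpha(1)=\mathrm{id}_U$, the right-hand side collapses to $-\alpha(y)\delta(x)v_{n-1}$; applying the compatibility relation $\alpha(y)\delta(x)+\delta(y)\beta(x)=0$ to $v_{n-1}$ and using $\beta(x)v_{n-1}=v_n$ gives precisely $-\alpha(y)\delta(x)v_{n-1}=\delta(y)v_n$. Hence $f$ is an $R$-module isomorphism $E_\delta\cong U\oplus V$ and the sequence splits. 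The main (and quite short) point is this last consistency check: because the cocycle $\delta$ is constrained only by the single relation $\delta(yx)=\delta(1)=0$, the $x$-recursion together with the surjectivity of $\alpha(y)$ is exactly enough to kill $\delta$.
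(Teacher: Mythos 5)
Your proof is correct. It arrives at the same splitting as the paper but by a genuinely different route: the paper exhibits a single element $a=b_0-x\delta(y)b_0\in E_\delta$, checks $ya=0$, deduces $Ra\cap U=0$, and then invokes simplicity of $V\cong E_\delta/U$ to conclude that the cyclic submodule $Ra$ is a complement of $U$; you instead trivialize the cocycle globally, building $h\in\Hom_k(V,U)$ with $\delta(r)=\alpha(r)h-h\beta(r)$ by the $x$-recursion and then verifying the $y$-equation on every basis vector via the relation $\alpha(y)\delta(x)+\delta(y)\beta(x)=0$. The two arguments are linked by the observation that the paper's element is exactly $v_0-h(v_0)$ for the particular choice $h(v_0)=\alpha(x)\delta(y)v_0$, which is a preimage of $\delta(y)v_0$ under $\alpha(y)$ since $\alpha(y)\alpha(x)=\mathrm{id}_U$, and the complement your isomorphism produces is the graph $\{(-h(v),v)\}$. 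What your version buys is a clean isolation of where the hypotheses enter --- only the surjectivity of $\alpha(y)$ on the simple module $U$ and the single constraint $\delta(yx)=\delta(1)=0$ are used --- at the cost of an inductive consistency check on all $v_n$, which the paper's cyclic-module argument sidesteps by using simplicity of $V$ (so that $Ra\cap U=0$ and $Ra\neq 0$ already force $Ra\oplus U=E_\delta$). Both arguments rely in the same way on the classification of $V$ as the shift module $k[x]$.
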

\begin{proof}
Let $\{b_0, b_1, b_2, \ldots\}$ be a basis of $V$ such that $y$ and $x$ are left and right shift operators, respectively. As vector spaces, $E_\delta\cong U\oplus V$. Consider the element 
\[a:=b_0-x \delta(y) b_0\]
 of $E_\delta$. It is clear that $a \in E_\delta\setminus U $. Then the left cyclic submodule $Ra$ of $E_\delta$ is distinct from 0 and $U$. For any element $r\in R$, we have 
 \[ra=\delta(r)b_0+\beta(r)b_0-rx\delta(y)b_0.\]
Hence $ra\in R_a \cap U$ only if $\beta(r)b_0=0$, that is, $r=sy$ for some $s\in R$. But 
\[ya=yb_0-yx\delta(y)b_0=\delta(y)b_0+\beta(y)b_0-\delta(y)b_0=0.\]
That is, $R_a\cap U=0$. Then $R_a \oplus U=E_\delta$ since $E_\delta/U\cong V$ is simple. Therefore $E_\delta \cong U \oplus V$ as left $R$-modules.
\end{proof}

The next case deals with the module extension when $U$ is infinite-dimensional and $V$ is one-dimensional.

\begin{lem}
Let $U$ and $U'$ be two infinite-dimensional simple $R$-modules,  $k_{\lambda} $ and $k_{\lambda'}$ be two one-dimensional $R$-modules for nonzero scalars $\lambda$ and $\lambda'$. Suppose $E_{\delta}$ and $E_{\delta'}$ are two $R$-module extensions with the  s.e.s. 
\[ 0\rightarrow U \rightarrow E_{\delta} \rightarrow k_{\lambda} \rightarrow 0 \quad \text{and} \quad 0\rightarrow U' \rightarrow E_{\delta'} \rightarrow k_{\lambda'} \rightarrow 0,\quad  resp.\] Then $E_{\delta} \cong E_{\delta'}$ if and only if $\lambda=\lambda'$ and $\delta'(x)=c\delta(x)$ for some nonzero $c\in k$. In this case the two s.e.s. are equivalent if and only if $E_{\delta} \cong E_{\delta'}$. As a consequence, $E_\delta$ (resp. $E_{\delta'}$) is nonsplit if and only if $\delta\neq 0$ (resp. $\delta'\neq 0$).
\end{lem}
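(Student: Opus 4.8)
The plan is to compress the extension data to one linear-algebraic invariant and then analyse module isomorphisms directly. Realize $U$ via Jacobson's model, equivalently $U\cong k[x]$ with $x$ acting by multiplication and $y$ as the operator $H^{-1}\tfrac{d}{dx}$ of the introduction. A cocycle $\delta$ attached to $0\to U\to E_\delta\to k_\lambda\to 0$ is pinned down by $\delta(x),\delta(y)\in\Hom_k(k_\lambda,U)$, which we record as $u:=\delta(x)(v)\in U$ for a spanning vector $v$ of $k_\lambda$; the relation $\delta(1)=0$ forces $\delta(y)(v)=-\lambda^{-1}\alpha(y)(u)$, and conversely every $u\in U$ produces an extension. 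Replacing the lift $v$ by $v+p$ ($p\in U$) replaces $u$ by $u+(\alpha(x)-\lambda)p$ and alters neither $E_\delta$ up to isomorphism nor the equivalence class of the s.e.s., so I normalize $\delta$ so that $u$ lies in a fixed complement of $(\alpha(x)-\lambda)U$ in $U$; a short computation shows $U/(\alpha(x)-\lambda)U\cong k[x]/(x-\lambda)\cong k$, so this complement is a line.

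For the ``if'' direction, assume $\lambda=\lambda'$ and $\delta'(x)=c\delta(x)$ with $c\in k^\times$. Then $\delta'(y)(v)=-\lambda^{-1}\alpha(y)(cu)=c\delta(y)(v)$, hence $\delta'=c\delta$ on the generators $x,y$ and therefore identically. The $k$-linear map $f\colon E_\delta\to E_{\delta'}$ equal to $c\cdot\mathrm{id}$ on $U$ and to the identity on the complementary line then satisfies $f\rho_\delta(x)=\rho_{\delta'}(x)f$ and $f\rho_\delta(y)=\rho_{\delta'}(y)f$; as $R$ is generated by $x$ and $y$, $f$ is an $R$-isomorphism, and since $f|_U$ is an isomorphism $U\to U'$ the two s.e.s.\ are equivalent (the converse implication being trivial). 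In particular $E_\delta\cong E_{\delta'}$.

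For the ``only if'' direction, let $g\colon E_\delta\to E_{\delta'}$ be an $R$-isomorphism. The submodule $U$ is the unique copy of the infinite-dimensional simple inside $E_\delta$: a simple submodule $W\cong U$ with $W\neq U$ would satisfy $W\cap U=0$, so $W\oplus U$, strictly containing $U$, would equal $E_\delta$ (because $E_\delta/U$ is simple), forcing $E_\delta\cong U\oplus U$ and contradicting the one-dimensional composition factor $k_\lambda$; likewise in $E_{\delta'}$. Hence $g(U)=U'$, so $g$ induces an isomorphism $k_\lambda\cong k_{\lambda'}$ and $\lambda=\lambda'$. Moreover $\End_R(U)=k$: a $k$-endomorphism of $k[x]$ commuting with multiplication by $x$ is multiplication by some $g_0\in k[x]$, and commuting with $y$ forces $y\cdot g_0=0$, i.e.\ $g_0\in k$. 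Thus $g|_U=c$ for some $c\in k^\times$; writing $g(v)=c'v'+p$ with $c'\in k^\times$, $p\in U$, the identity $g\rho_\delta(x)=\rho_{\delta'}(x)g$ evaluated on $v$ unwinds to $c'u'=cu-(\alpha(x)-\lambda)p$ (the corresponding identity for $y$ follows by applying $\alpha(y)$, hence is automatic). So $u'$ and $cu$ agree modulo $(\alpha(x)-\lambda)U$, and since this subspace has codimension one and $u,u'$ were normalized into the chosen complementary line, $\delta'(x)=(c/c')\delta(x)$. Finally, in normalized form $\delta=0\iff u=0\iff u\in(\alpha(x)-\lambda)U\iff E_\delta$ splits, giving the stated consequence.

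The main obstacle is the ``only if'' direction: forcing every isomorphism into the block-triangular scalar shape above. This relies on $U$ being a characteristic submodule of $E_\delta$ — obtained from uniqueness of the infinite-dimensional simple together with the composition-factor count, valid whether or not the extension splits — and on $\End_R(U)=k$, plus the bookkeeping linking an abstract isomorphism $E_\delta\cong E_{\delta'}$ to the chosen cocycles. The one-dimensionality of $U/(\alpha(x)-\lambda)U$ (equivalently $\dim_k\mathrm{Ext}^1_R(k_\lambda,U)=1$) is what collapses ``agree modulo $(\alpha(x)-\lambda)U$'' to honest proportionality and shows there is a single nonsplit extension up to isomorphism for each $\lambda$.
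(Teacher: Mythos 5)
Your route is genuinely different from the paper's. The paper fixes the basis $\{e_0,e_1,\ldots,d\}$ and computes the matrix of a putative isomorphism entry by entry; you instead organize the problem around $\operatorname{Ext}^1_R(k_\lambda,U)\cong U/(\alpha(x)-\lambda)U\cong k[x]/(x-\lambda)\cong k$ (cocycles modulo coboundaries), and then pin down an arbitrary isomorphism using the fact that $U$ is the unique infinite-dimensional simple submodule of $E_\delta$ together with $\End_R(U)=k$. Each step checks out: $\delta$ is determined by $u=\delta(x)(v)$ with $\delta(y)(v)=-\lambda^{-1}\alpha(y)(u)$; changing the lift of $v$ changes $u$ by $(\alpha(x)-\lambda)p$; the $y$-intertwining condition is implied by the $x$-condition (apply $\alpha(y)$ and use $\alpha(y)\alpha(x)=\mathrm{id}$); and $(\alpha(x)-\lambda)U$ has codimension one. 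This buys a conceptual explanation --- one-dimensionality of $\operatorname{Ext}^1$ --- for why there is a single nonsplit extension per $\lambda$.

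However, notice exactly what you have proved. Because of your normalization, your conclusion is that $c'\delta'(x)$ and $c\,\delta(x)$ agree modulo $(\alpha(x)-\lambda)U$, i.e.\ that the normalized representatives (equivalently, the classes in $\operatorname{Ext}^1$) are proportional --- not that $\delta'(x)=c\,\delta(x)$ for the cocycles originally handed to you, which is what the lemma literally asserts. This is not a flaw in your argument: the literal assertion is false. Take $\lambda=\lambda'=1$, $\delta(x)=e_0$, $\delta(y)=0$ and $\delta'(x)=e_1$, $\delta'(y)=-e_0$ (both satisfy $\alpha(y)\delta(x)+\delta(y)\beta(x)=0$); the map that is the identity on $U$ and sends $d\mapsto d+e_0$ is an $R$-isomorphism $E_{\delta'}\to E_{\delta}$, both extensions are nonsplit, yet $e_1\notin k\,e_0$. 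Similarly $\delta(x)=(\alpha(x)-\lambda)p\neq 0$ gives a split extension with $\delta\neq 0$, against the stated ``consequence.'' The paper reaches the stronger conclusion $b_0=\cdots=b_m=0$ only because its displayed relation $\delta(y)_i=\lambda^{-1}\delta(x)_{i+1}$ drops a sign: the constraint $\alpha(y)\delta(x)+\lambda\delta(y)=0$ gives $\delta(y)_i=-\lambda^{-1}\delta(x)_{i+1}$ for all $i\ge 0$, and with the correct sign the paper's systems (2) and (3) coincide, so the $b_i$ stay free and encode precisely the coboundary $(\alpha(x)-\lambda)\phi$ that you quotient out. So your version is the correct statement, but you should flag explicitly that you are proving the lemma with $\delta(x)$ replaced by its class in $U/(\alpha(x)-\lambda)U$.
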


\begin{proof}
We will fix a basis $\{e_0, e_1, e_2, \ldots, d \}$ for both $E_{\delta}$ and $E_{\delta'}$ as $k$-vector spaces, where $\{e_0, e_1, e_2, \ldots \}$ is a basis of $U$ (and $U'$) such that $y$ and $x$ are left and right shift operators, respectively. For any $r\in R$, we can identify the map $\delta(r)$, under the fixed basis, with an infinite-dimensional vector
\[\langle \delta(r)_0,\, \delta(r)_1,\, \delta(r)_2,\, \ldots\rangle\]
with only finitely many nonzero components. Note that $\alpha(y)\delta(x)+\delta(y)\beta(x)=0$, where $\beta(x)=\lambda$ and that $\alpha (y)$ is the upper diagonal line matrix given in section 1. It follows that 
\begin{align*}
\delta(y)_i=\lambda^{-1} \delta(x)_{i+1}\quad  \text{for} \quad i\geq 1\tag{1}
\end{align*}
Similar result for $\delta'(x)$ and $\delta'(y)$ holds. Suppose that $m$ is the smallest integer such that $\delta(y)_i=\delta'(y)_i=0$ for any $i >m$.  Consequently, $\delta(x)_{i}=\delta'(x)_{i}=0$ for any $i>m+1$.

Suppose that $f$ is a $R$-module isomorphism $E_{\delta'} \to E_{\delta}$, that is, $f$ is a $k$-vector space isomorphism such that both $\rho_{\delta}(x)f=f\rho_{\delta'}(x)$ and $\rho_{\delta}(y)f=f\rho_{\delta'}(y)$. We will obtain necessary conditions on $f$ through its images on the basis elements of the selected basis.
Let 
\[f(e_0)=ae_0+a_1e_1+a_2e_2+\cdots
+a'd,\] 
for some $a', a_{i}\in k, i=1, 2, \ldots$, where only finitely many $a_{i}$'s are nonzero. Firstly, 
\begin{align*}
f\circ \rho_{\delta'}(y)(e_0)&=0, \quad \text{and}\\
\rho_{\delta}(y)\circ f(e_0)&=\sum_{i\ge 0}\, (a_{i+1}+a'\delta(y)_i)e_i+ \f{1}{\lambda}a'd.
\end{align*}
Hence, $a'= a_{i}=0$ for all $i=1, 2, \ldots$, and so $f(e_0)=ae_0$. 
Moreover, \[f(e_1)=f(xe_0)=xf(e_0)=x(ae_0)=ae_1\] implies that $f(e_1)=ae_1$. Inductively, $f(e_i)=ae_i$ for some $a \neq 0$ and all $i\geq 0$.
Next,  suppose that 
\[f(d)=b_0e_0+b_1e_1+b_2e_2+\cdots
+bd,\]
where $b\neq 0$, $b_{i}\in k$ for $i\geq 0$ and only finitely many $b_{i}$'s are nonzero.  Then
\begin{align*}
\rho_{\delta}(y)\circ f(d)&= \sum_{i\ge 0} b_{i+1}e_i+ \sum_{i\ge 0}b\delta(y)_i e_i + \lambda^{-1}bd,
\\
f\circ \rho_{\delta'}(y)\, (d)&= \sum_{i\ge 0}\Big(a\delta'(y)_i + \f{1}{\lambda'}b_i\Big) e_i +\f{1}{\lambda'}bd.
\end{align*}
Thus, we have
\[\lambda=\lambda', \quad b_{i+1}+ b\delta(y)_i=a\delta'(y)_i + \lambda^{-1}b_i\quad \text{for}\quad i\geq 0.\]
Since $\delta(y)_i=\delta'(y)_i=0$ for any $i >m$, we have $b_{i+1}=\lambda^{-1} b_i$ for any $i >m$. 
But only finitely many $b_{i}$'s are nonzero, it then follows inductively that 
\[b_{m+1}=b_{m+2}=\ldots=0.\] 
Hence, we have the $m+1$ relations
\begin{align*}
\begin{cases}
 \hspace*{.4in} b\delta(y)_m&=\quad a\delta'(y)_m + \lambda^{-1}b_m,\\
 b_{i+1}+ b\delta(y)_i&=\quad a\delta'(y)_i + \lambda^{-1}b_i \quad \text{for}\quad i=0, 1,  \ldots, m-1.
\tag{2}
\end{cases} 
\end{align*}
Similarly, we have
\begin{align*}
\rho_{\delta}(x)\circ f(d)&= \sum_{i\ge 1} b_{i-1}e_{i}+ \sum_{i\ge 0}b\delta(x)_i e_i + \lambda bd,
\\
f\circ \rho_{\delta'}(x)\, (d)&=\sum_{i\ge 0}\Big( a\delta'(x)_i + \lambda'b_i \Big) e_i + \lambda' bd.
\end{align*}
Note that $\delta(x)_j=\delta'(x)_j=0$ for any $j>m+1$.  It then follows that
\begin{align*}
\begin{cases}
 \hspace*{.4in}  b\delta(x)_0 &=\quad a\delta'(x)_0 + \lambda b_0, \\
b_m + b\delta(x)_{m+1} &=\quad a\delta'(x)_{m+1},\\
b_{i-1} + b\delta(x)_{i} &=\quad a\delta'(x)_{i} +\lambda b_{i} \quad \text{for}\quad i=1, 2, \ldots, m.  \tag{3}
\end{cases}
\end{align*}
Combining  the relations (1) and (3), we have
\begin{align*}
\begin{cases}
b\delta(y)_m-a\delta'(y)_m&=\quad -\lambda^{-1}b_m,\\
b\delta(y)_i -a\delta'(y)_i &=\quad b_{i+1}- \lambda^{-1}b_{i} \quad \text{for}\quad i=0, 1, \ldots, m-1. 
\end{cases}
\end{align*}
From (2), we have 
\begin{align*}
\begin{cases}
b\delta(y)_m-a\delta'(y)_m &=\quad \lambda^{-1}b_m,\\
b\delta(y)_i -a\delta'(y)_i &=\quad \lambda^{-1}b_{i} -b_{i+1} \quad \text{for}\quad i=0, 1, \ldots, m-1. 
\end{cases}
\end{align*}
Hence, $b_i=\lambda b_{i+1}$ for $0\leq i\leq m-1$ and $b_m=0$. Thus, $b_0=b_1=\ldots=b_m=0$. 

Therefore, $f(e_i)=ae_i$ and $f(d)=bd$ for some nonzero scalars $a,\, b\in k$ and all $i\geq 0$. Such a $k$-vector space isomorphism is a $R$-module isomorphism if and only if $\ds{\delta'(x)=\f{b}{a}\delta (x)}$ for the nonzero scalars $a, b\in k$ or equivalently, $\ds{\delta'(r)=\f{b}{a}\delta (r)}$ for any $r\in R$.

Therefore, any module extension $E_{\delta}$ such that $E_{\delta}/U\cong k_{\lambda}$ is nonsplit if and only if $\delta(x)\neq0$. Let $E_{\delta}$ and $E_{\delta'}$ be nonsplit extensions such that 
\[ 0\rightarrow U \rightarrow E_{\delta} \rightarrow k_{\lambda} \rightarrow 0 \quad \text{and} \quad 0\rightarrow U' \rightarrow E_{\delta'} \rightarrow k_{\lambda'} \rightarrow 0.\]
Then $E_{\delta} \cong E_{\delta'}$ if and only if $\lambda =\lambda'$ and $\delta'(x)=c \delta (x)$ for some nonzero scalar $c\in k$. Observe that the isomorphism $f$ from $E_{\delta}$ to $E_{\delta'}$ yields an isomorphism from $U$ to $U'$. Therefore, the two \emph{s.e.s.} are equivalent if and only if $E_{\delta} \cong E_{\delta'}$. 
\end{proof}

Now we can state our main result. 

\begin{thm}
Suppose $0\rightarrow U \rightarrow E_{\delta} \rightarrow V \rightarrow 0$ is a s.e.s.  where $U$ and $V$ are simple $R$-modules and $E_{\delta}$ is associated with the $k$-linear map $\delta$ in $\Hom_k(V, U)$. Let $\lambda$, $\lambda'$ be nonzero scalars. 
\begin{itemize}
\item[i)] If $\dim(V)=\infty$, the s.e.s. is always split. 
\item[ii)] If $\dim(U)=\infty$ and $V=k_{\lambda}$,  the s.e.s.  is nonsplit if and only if $\delta\neq 0$. Any such two s.e.s. are  equivalent if and only if $\lambda=\lambda'$ and the infinite vectors $\delta(x)$ and $\delta'(x)$ are proportional.
\item[iii)] If $U=k_{\lambda}$ and $V=k_{\lambda'}$ are both one-dimensional, then the s.e.s.  is nonsplit only if $\delta \neq 0$ and $\lambda=\lambda'$. Any such two nonsplit s.e.s. are equivalent if and only if the submodules $U$ are the same.
\end{itemize}

\end{thm}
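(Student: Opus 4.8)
The plan is to derive parts (i) and (ii) from the two lemmas just proved and to settle part (iii) by a direct computation with $2\times2$ matrices. Part (i) is verbatim the first lemma. For part (ii), the second lemma already gives the nonsplit criterion $\delta\neq0$, the isomorphism criterion that $E_\delta\cong E_{\delta'}$ exactly when $\lambda=\lambda'$ and $\delta'(x)=c\,\delta(x)$ for some $c\in k^{\times}$, and the fact that for these extensions equivalence of short exact sequences coincides with isomorphism of the middle terms; since $\delta'(x)=c\,\delta(x)$ is precisely proportionality of the infinite vectors $\delta(x),\delta'(x)$, this is the statement of (ii).

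For (iii) write $U=k_\lambda=ke$, pick $d\in E_\delta$ lifting a generator of $V=k_{\lambda'}$, so that $\{e,d\}$ is a $k$-basis of the two-dimensional space $E_\delta$. Because $R$ is generated by $x$ and $y$ subject only to $yx=1$, specifying $\rho_\delta$ amounts to specifying the two matrices
\[\rho_\delta(x)=\begin{pmatrix}\lambda&\delta(x)\\0&\lambda'\end{pmatrix},\qquad\rho_\delta(y)=\begin{pmatrix}\lambda^{-1}&\delta(y)\\0&(\lambda')^{-1}\end{pmatrix}\]
subject to $\rho_\delta(y)\rho_\delta(x)=I_2$; this is the compatibility relation recorded above and it forces $\delta(y)=-\delta(x)/(\lambda\lambda')$, while conversely any scalar $\delta(x)$ (with $\delta(y)$ so defined) produces a genuine extension. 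Thus the extension is encoded by the single scalar $\delta(x)$.

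Next I would analyze splitting. A complement to $U$ is a line $k(d+ce)$ with $c\in k$ on which $x$ acts by $\lambda'$ and $y$ by $(\lambda')^{-1}$; computing $x(d+ce)$ and $y(d+ce)$ from the matrices above, one checks that both eigenvector equations collapse to the single relation $c(\lambda-\lambda')=-\delta(x)$. Hence if $\lambda\neq\lambda'$ such a $c$ exists and the sequence splits, whereas if $\lambda=\lambda'$ the relation forces $\delta(x)=0$; so the sequence is nonsplit if and only if $\delta\neq0$ and $\lambda=\lambda'$, which in particular yields the ``only if'' part of (iii). Moreover in the nonsplit case $\rho_\delta(x)$ is a single Jordan block, so $U$ is the unique one-dimensional submodule of $E_\delta$ and $0\subset U\subset E_\delta$ is the unique composition series.

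Finally, for the equivalence assertion, let $E_\delta$ and $E_{\delta'}$ be two nonsplit such sequences; by the previous step $U\cong V\cong k_\lambda$, $U'\cong V'\cong k_{\lambda'}$, and $\delta(x),\delta'(x)\in k^{\times}$. If $\lambda\neq\lambda'$ then $U\not\cong U'$, so no isomorphism $E_\delta\to E_{\delta'}$ can carry $U$ onto $U'$ and the sequences are inequivalent. If $\lambda=\lambda'$, define $f$ on the chosen bases by $f(e)=\delta'(x)\,e'$ and $f(d)=\delta(x)\,d'$; a direct check shows that $f$ intertwines the $x$- and $y$-actions (the only relation it produces is $\delta(x)\delta'(x)=\delta'(x)\delta(x)$), that $f$ is bijective, and that $f(U)=U'$, so the sequences are equivalent. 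Hence all nonsplit extensions with a prescribed submodule $U$ lie in a single equivalence class, as claimed. All the computations are routine; the one point to keep in mind is that the equivalence in force here asks only for an isomorphism $f$ with $f(U)=U'$, not for $f$ to be the identity on $U$, and it is this looseness that collapses every nonsplit extension of $k_\lambda$ by $k_\lambda$ into one class.
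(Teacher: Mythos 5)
Your proposal is correct and follows essentially the same route as the paper: parts (i) and (ii) are quoted from the two lemmas, and part (iii) is settled by the same direct $2\times 2$ computation (the paper phrases the splitting criterion as similarity of $\rho_\delta(x)$ to $\rho_0(x)$, which is exactly your complementary-eigenline condition $c(\lambda-\lambda')=-\delta(x)$, and dismisses the equivalence claim with ``by a linear transformation'' where you exhibit the intertwiner $f(e)=\delta'(x)e'$, $f(d)=\delta(x)d'$ explicitly). Your closing remark correctly identifies the point that makes the equivalence claim work, namely that the paper's notion of equivalence only requires $f$ to restrict to \emph{an} isomorphism $U\to U'$ rather than the identity.
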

\begin{proof}
The first two cases are proved in Lemma 2.1 and Lemma 2.2. We only need to consider the case when $U$ and $V$ are both one-dimensional. Suppose the two modules $U$ and $V$ are uniquely determined by nonzero scalars $\lambda$ and $\lambda'$. Let 
$$0\rightarrow k_{\lambda} \rightarrow E_{\delta}\rightarrow k_{\lambda'}\rightarrow 0$$ be a \emph{s.e.s.} Then $\delta$ is uniquely determined by $\delta(x)$ since $\delta(y)=-(\lambda \lambda')^{-1}\delta(x)$. Moreover, $\rho_{\delta}(y)$ is the inverse matrix of $\rho_{\delta}(x)$. 
Note that the $2\times 2$ matrix $\rho_{\delta}(x)$ is similar to $\rho_0(x)$ if and only if $\lambda \neq \lambda'$. Hence, the \emph{s.e.s.} is always split if $\lambda \neq \lambda'$, no matter $\delta=0$ or not. Therefore, the nonsplit case occurs when $\delta \neq 0$ and $\lambda=\lambda'$. Consider two nonsplit  \emph{s.e.s.} 
\[ 0\rightarrow k_{\lambda} \rightarrow E_{\delta} \rightarrow k_{\lambda} \rightarrow 0 \quad \text{and} \quad 0\rightarrow k_{\gamma} \rightarrow E_{\delta'} \rightarrow k_{\gamma} \rightarrow 0,\]
with nonzero $\delta$ and $\delta'$. It is easy to see, by a linear transformation, that the two  nonsplit  \emph{s.e.s.}  are equivalent if and only if  $E_{\delta}\cong E_{\delta'}$ if and only if the nonzero scalars $\lambda=\gamma$. Thus, there is only one, up to equivalence, nonsplit \emph{s.e.s.}  $0\rightarrow k_{\lambda} \rightarrow E_{\delta} \rightarrow k_{\lambda} \rightarrow 0$ for each  one-dimensional simple $R$-module $k_{\lambda}$.
\end{proof}

\section{Close discussion}
Let $A$ be an associative ring. Recall a left (respectively, right) module $M$ over $A$ is called \emph{torsionfree} if for any nonzero element $m$ in $M$ there is some $r\in A$ such that $rm\neq 0$ (respectively, $mr\neq 0$). Two prime ideals $P$ and $Q$ of an associative ring $A$ are \emph{linked}, denoted as $P\rightsquigarrow Q$, if there is an ideal $I$ of $A$ such that $(P\cap Q)>I\ge PQ$ and $(P\cap Q)/I$ is nonzero and torsionfree both as a left $A/P$-module and a right $A/Q$-module. The graph of links of $A$ is a directed graph whose vertices are prime ideals of $A$, with an arrow from $P$ to $Q$ whenever $P\rightsquigarrow Q$. The vertex set of each connected component is called a \emph{clique}.

Jategaonkar's Main Lemma \cite{Jate} states that if $M$ is a (right) module over a noetherian ring $A$ with a nonsplit short exact sequence $0\rightarrow U\rightarrow M \rightarrow V\rightarrow 0$ and corresponding annihilators  $Q=\text{ann}_{A}(U)$ and $P=\text{ann}_{A}(V)$, then exactly one of the following two alternatives occurs i) $P < Q$ and $PM=0$; ii) $P \rightsquigarrow Q$. 

Now let $0\rightarrow U\rightarrow E_{\delta} \rightarrow V\rightarrow 0$ be a nonsplit short exact sequence, where $U$ and $V$ are simple $R$-modules. Suppose $Q=\text{ann}_{R}(U)$ and $P=\text{ann}_{R}(V)$ are the affiliated primes.
When $\dim U=\infty$ and $V\cong k_{\lambda}$, we have $Q=(0)$ and $P=\langle 1-xy,\, x-\lambda \rangle$.
There is no link between $P$ and $Q$, neither $P < Q$. When $U\cong V\cong k_{\lambda}$, we have $Q=P=\langle 1-xy,\, x-\lambda \rangle$. There is no link between $P$ and $Q$, neither $P < Q$. This suggests that the noetherianess is necessary in the assumptions of Jategaonkar's Main Lemma.

On the other hand, a theorem of Stafford \cite[Corollary 3.13]{Stafford} states that all cliques of prime ideals in any noetherian ring are countable. When $k$ is algebraically closed, the prime ideals of $R$ are $(0)$, $F=\langle 1-xy\rangle$, and $P_\lambda=\langle 1-xy,\, x-\lambda\rangle$, where $\lambda\in k^\times$. One can check that 
\[
F=F^2=F\cap P_\lambda=FP_\lambda=P_\lambda F=P_\lambda\cap P_{\lambda '}=P_\lambda P_{\lambda'}
\]
whenever $\lambda\neq \lambda'$. Moreover, 
$
P_\lambda/P_{\lambda}^2\cong (x-\lambda)/(x-\lambda)^2
$
as in $k[x^{\pm 1}]$. Hence the cliques in the graph of links are
\[\xymatrix{
F && (0) && P_\lambda \ar@{->}@(ul,ur)  && P_{\lambda'} \ar@{->}@(ul,ur)\\
}
\]
This suggests that all cliques of $R$ are countable.

\begin{Ack}
The second author would like to express her gratitude to Professor E. Letzter for his valuable suggestions. The first author was a math major at the University of Pittsburgh who participated in an undergraduate research project that was related to this short article.  We are thankful to the math department of the University of Pittsburgh for its supporting. We also would like to express our gratitude to the referee for carful reading and helpful suggestions.
\end{Ack}


\begin{thebibliography}{99}

\bibitem{Bav} V.~Bavula, The algebra of one-sided inverses of a polynomial algebra, \emph{J.~Pure~and~Applied~Algebra}, 214 (2010) 1874--1897.

\bibitem{Ger} L.~Gerritzen, Modules over the algebra of the noncommutative equation $yx=1$, \emph{Arch.~Math.}, 75 (2000), 98--112.

\bibitem{Jac} N.~Jacobson, Some remarks on one-sided inverse, \emph{Proc.~Amer.~ Math.~Soc.}, 1 (1950) 352--355.

\bibitem{Jate} A.~Jategaonkar, \emph{Localization in noetherian rings, London Math. Soc. Lecture Note Series 98}, Cambridge Univ. Press, Cambridge, 1986.

\bibitem{Irv} R.~Irving, Prime ideals of ore extensions over commutative rings, II, \emph{J.~ Algebra}, 58 (1979) 399--423.

\bibitem{Stafford} J.~Stafford, The Goldie rank of a module, Noetherian rings and their applications, \emph{Math.~Surveys~and~Monographs~24}, Amer. Math. Soc., Providence, 1987, 1--20.

\end{thebibliography}
\end{document}